\begin{document}


\markboth{Berry and Sauer}{Journal of Difference Equations and Applications}

\title{{\itshape Convergence of Periodically-Forced Rank-Type Equations}}

\author{Tyrus Berry\thanks{Corresponding author. Email: tberry@gmu.edu} and Timothy Sauer \\\vspace{6pt}  {\em{George Mason University; Fairfax, VA  22030}; }}

\newcommand{\ov}[0]{\overline}

\maketitle

\begin{abstract} Consider a difference equation which takes the k-th largest output of $m$ functions  of the previous $m$ terms of the sequence.  If the functions are also allowed to change periodically as the difference equation evolves this is analogous to a differential equation with periodic forcing.  A large class of such non-autonomous difference equations are shown to converge to a periodic limit which is independent of the initial condition.  The period of the limit does not depend on how far back each term is allowed to look back in the sequence, and is in fact equal to the period of the forcing.\end{abstract}

{\em To appear in the Journal of Difference Equations and Applications}

\section{Introduction}
Recently, there has been substantial interest in max-type difference equations \cite{GL,Voulov1,Voulov2,Voulov3,BFS,KR,Yang,BGKL,Chen}. 
For the  equation
\begin{equation} x_n = \max_{1\leq i \leq M} \{f_i(x_{n-i}) \} \end{equation}
with initial sequence $(x_1,...,x_M)$, it has been shown  \cite{sauerMax} that if the $f_i$ are contractive, then all solutions converge to a fixed point, i.e.
\begin{equation*} \lim_{n\to\infty} x_n = r_*. \end{equation*}
Moreover, the fixed point $r_*$ can be identified as the maximum of the individual fixed points 
\begin{equation*} r_* = \max_{1\le i\le M}\{r_i\}, \end{equation*}
where $r_i$ is the unique fixed point $r_i=f_i(r_i)$.

In view of this result, it is reasonable to ask about convergence in the periodically-forced case. Assume  that  $f_1, \ldots, f_M$ vary periodically with the discrete time variable $n$. We will investigate limiting behavior of the nonautonomous difference equation
\begin{equation}\label{maxeq} x_n = \max_{1\leq i \leq M} \{f_i(x_{n-i},n) \} \end{equation}
where $f_i:\mathbb{R}\times \mathbb{N} \to \mathbb{R}$ are contractions with respect to the first variable and $P$-periodic with respect to the second.  That is, we assume there exists $\alpha <1$ such that
\begin{equation} \label{alpha}  |f_i(x,n)-f_i(y,n)| \leq \alpha|x-y| 
\end{equation}
for all $i,n$, and 
$ f_i(x,n+P) = f_i(x,n)$ for all $n$.
We think of $P$ as the {\em forcing period} and $M$ as the {\em memory length}.  We will show below that solutions of the contractive, $P$-periodic difference equation (\ref{maxeq}) converge to a unique periodic orbit with period $P$, for all initial sequences.

In \cite{sauerRank}, rank-type equations were proposed as a generalization of max-type equations. Consider the difference equation
\begin{equation}\label{rankeq} x_n = \underset{1\leq i \leq M}{\textup{$k$-rank}} \{f_i(x_{n-i}) \} \end{equation}
where $k$-rank denotes the $k$th largest value. It was shown in \cite{sauerRank} that with the same contractiveness hypotheses, solutions converge to $r_* = \text{ $k$-rank\  } \{ r_i\}$, the $k$th largest of the individual fixed points. In this paper, we further generalize this result to the periodically-forced case. Our main result is the following.

\begin{theorem} \label{theorem1}
Let $f_1,\ldots,f_M:\mathbb{R}\times \mathbb{N} \to \mathbb{R}$ be contractions with respect to the first variable and $P$-periodic with respect to the second. Let $1\leq k \leq M$. Then for any initial sequence, the solution of the difference equation
\begin{equation}\label{rankeq1} x_n =  \underset{1\leq i \leq M}{\textup{$k$-rank}} \{f_i(x_{n-i},n) \} \end{equation}
is asymptotically periodic with period $P$. 
\end{theorem}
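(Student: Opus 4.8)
The plan is to split Theorem~\ref{theorem1} into two parts: (i) any two solutions of \eqref{rankeq1} converge to one another, and (ii) there is at least one $P$-periodic solution. Granting these, applying (i) with one of the two solutions taken to be the periodic one shows that every solution approaches it, which is precisely asymptotic periodicity with period $P$; it also yields as a byproduct that the limiting orbit is unique and independent of the initial sequence, matching the uniqueness already noted above for the periodically-forced max-type equation.

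Everything rests on an elementary Lipschitz bound for the order statistic: for $a,b\in\mathbb{R}^M$,
\[
\bigl|\,\text{$k$-rank}(a)-\text{$k$-rank}(b)\,\bigr|\le\|a-b\|_\infty .
\]
I would derive this from the two transparent properties of $k$-rank --- monotonicity (if $a_i\le b_i$ for all $i$ then $\text{$k$-rank}(a)\le\text{$k$-rank}(b)$) and translation equivariance ($\text{$k$-rank}(a+c\mathbf 1)=\text{$k$-rank}(a)+c$) --- applied to the inequalities $a\le b+\|a-b\|_\infty\mathbf 1$ and $b\le a+\|a-b\|_\infty\mathbf 1$. Combined with the contraction hypothesis \eqref{alpha}, this gives the one-step estimate that drives the whole argument: for any two solutions $x,y$,
\[
|x_n-y_n|\le\alpha\max_{1\le i\le M}|x_{n-i}-y_{n-i}| .
\]

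For part (i) I would track the window maximum $D_n:=\max_{0\le j<M}|x_{n-j}-y_{n-j}|$. The one-step estimate bounds each newly produced term by $\alpha D_{n-1}\le D_{n-1}$, so $D_n$ is nonincreasing; and once $M$ further terms have been produced, every entry of the window has been contracted by a factor $\alpha$ relative to $D_n$, giving $D_{n+M}\le\alpha D_n$. Hence $D_n\to 0$ geometrically and $|x_n-y_n|\to 0$. For part (ii) I would observe that a $P$-periodic solution is exactly a fixed point of the period map $\Phi:\mathbb{R}^P\to\mathbb{R}^P$ whose $n$th coordinate, for $1\le n\le P$, is $\text{$k$-rank}_{1\le i\le M}\{f_i(z_{n-i},n)\}$ with the subscript $n-i$ reduced modulo $P$; this is well defined even when $M>P$, by the $P$-periodicity of the $f_i$. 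The Lipschitz bound together with \eqref{alpha} makes $\Phi$ an $\alpha$-contraction of $(\mathbb{R}^P,\|\cdot\|_\infty)$, so the Banach fixed point theorem supplies a unique fixed point $z^\ast$, whose $P$-periodic extension is the desired (unique) $P$-periodic solution.

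I expect no conceptual obstacle here; the care goes into the bookkeeping, particularly pinning down the index ranges so that ``$D_{n+M}\le\alpha D_n$'' is rigorous, and handling $M>P$ (windows wrapping around several forcing periods) when defining $\Phi$. One can also bypass the Banach step entirely: since the $P$-shift $(x_{n+P})_n$ of any solution is again a solution of \eqref{rankeq1}, part (i) forces $|x_n-x_{n+P}|\to 0$ geometrically, so each residue subsequence $(x_{mP+r})_m$ is Cauchy, and its limits assemble into the $P$-periodic sequence that attracts $x$.
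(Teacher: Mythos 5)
Your proposal is correct, and it reaches the conclusion by a genuinely different route than the paper. The paper also isolates the sup-norm Lipschitz bound for $k$-rank as the key lemma, but proves it by a pigeonhole argument on the sorted index lists of the two vectors, whereas you derive it from monotonicity plus translation equivariance of the order statistic; both are sound, and yours is the slicker of the two. The main divergence is in the convergence argument. The paper packages $s=PM$ consecutive terms into a block, recursively defines a map $\ov F:\mathbb{R}^{s}\to\mathbb{R}^{s}$, proves by induction that $\ov F$ encodes the dynamics and is a sup-norm contraction, obtains a unique $PM$-periodic limit from the Banach fixed point theorem, and then needs a separate shift-commutation argument (Theorem \ref{Pperiodicity}) to show the fixed point is actually $P$-periodic. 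You instead split the claim into (i) any two solutions attract each other, proved by the direct window-maximum estimate $D_{n+M}\le\alpha D_n$, and (ii) existence of a $P$-periodic solution, via either a simultaneous-update period map on $\mathbb{R}^{P}$ or the observation that the $P$-shift of a solution is again a solution, so that (i) forces the residue subsequences to be Cauchy. Your route lands on period $P$ directly, avoids the bookkeeping of the recursive $F_k$ construction, and is the more elementary argument; note also that your one-step estimate $|x_n-y_n|\le\alpha\max_i|x_{n-i}-y_{n-i}|$ is exactly the paper's sup-contractivity hypothesis, so your proof of (i) and the shift argument in (ii) transfer verbatim to the paper's more general Corollary \ref{mainresult}, not just to the $k$-rank case. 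What the paper's construction buys in exchange is a single Poincar\'e-type contraction whose fixed point is the entire periodic orbit, which it then reuses in Section 4 when identifying the limit explicitly.
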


Unlike the autonomous case, in general we do not know how to find a formula for the periodic solution in terms of the individual dynamics of the $f_i$. In the fourth section of this article we relate some partial progress in this direction.

The conclusion of Theorem \ref{theorem1} fails if $\alpha\geq 1$ in (\ref{alpha}), even if $k=P=1$, the autonomous max-type case \cite{GL}. We mention two well-known examples where the condition on $\alpha$ does not hold.
\begin{example}
Consider the difference equation
\[x_n=\max\{-x_{n-1},-x_{n-2}\}.\]
Although this is an autonomous max-type equation ($k=P=1$), is easy to check that all solutions have period $3\neq P$. In this example, $\alpha=1$, and Theorem \ref{theorem1} does not apply.
\end{example}
\begin{example}
The first-order difference equation
\[x_n=\max\{1-2x_{n-1},2x_{n-1}-1\}\]
has bounded, nonperiodic (chaotic) solutions for almost every initial condition $x_0$. In this example, $\alpha=2$. As a dynamical system, this example is the upside-down tent map.
\end{example}
\begin{example}\label{examp0}
For a straightforward application of Theorem \ref{theorem1}, fix positive integers $P$ and $k\leq M$, and denote by $A$ and $B$ two $P\times M$ matrices of real numbers, where the entries $|A_{ij}|<1$. Define the difference equation
\begin{equation}
x_n = \textup{$k$-rank} \{A_{\overline{n}1}x_{n-1}+B_{\overline{n}1}, A_{\overline{n}2}x_{n-2}+B_{\overline{n}2}, \ldots, A_{\overline{n}M}x_{n-M}+B_{\overline{n}M}\}
\end{equation}
where $\overline{n}\equiv  1+(n-1 \mod P)$. Thus the coefficients cycle through the rows of the matrices $A$ and $B$, forcing the equation with period $P$. Theorem \ref{theorem1} implies that all solutions converge asymptotically to a period $P$ solution.
\end{example}

\begin{example}\label{ex1} Let $A$ be a $P\times M$ matrix of positive real numbers and consider the recurrence
\begin{equation} \label{eq31}
 x_n = \max\{A_{\ov n1}x_{n-1}^{\alpha_1},\ldots,A_{\ov nM}x_{n-M}^{\alpha_m} \}
 \end{equation}
where $-1 < \alpha_i < 1$ and $\overline{n}= 1+(n-1 \mod P)$.
It follows from Theorem \ref{theorem1} (applied to $y_n=\ln x_n$) that the recurrence converges to a unique period $P$ orbit.   Note that this result is independent of the memory length $M$. The periodicity of the limit only depends on the periodicity of the forcing. 

In sufficiently simple cases, we can say more about the convergent solution.  In Section 4, we further pursue the special case $M=P=2$ of (\ref{eq31}). A closed form for the globally attracting solution of (\ref{eq31})  is given by
\begin{eqnarray*} \lim_{n\to\infty} x_{2n} &=& \max \left\{ A_{12}A_{21}^{\frac{\alpha_1}{1-\alpha_2}},A_{11}^{\frac{\alpha_1}{1-\alpha_1^2}}A_{12}^{\frac{1}{1-\alpha_1^2}}, A_{22}^{\frac{1}{1-\alpha_2}} \right\} \\
\lim_{n\to\infty} x_{2n+1} &=& \max \left\{ A_{11}A_{22}^{\frac{\alpha_1}{1-\alpha_2}},A_{12}^{\frac{\alpha_1}{1-\alpha_1^2}}A_{11}^{\frac{1}{1-\alpha_1^2}}, A_{21}^{\frac{1}{1-\alpha_2}} \right\}. 
 \end{eqnarray*}
 (See Example \ref{ex44} for details.)
The complexity of this solution contrasts with the simplicity of the case $P=1$, which is simply $x_n \to \max\left\{A_i^{\frac{1}{1-\alpha_i}}\right\}$ (see \cite{sauerMax,Sun,Stev}).
\end{example}
\begin{example} Let $A_1, A_2, A_3$ be real numbers less than $0.15$, $B_1,B_2, B_3$ be arbitrary real numbers, and $P$ be a positive integer. Then it follows from Theorem \ref{theorem1} that every solution of
\[ x_n = \textup{median} \left\{ e^{A_1\sin(B_1+2\pi n/P) - x_{n-1}^2}, e^{A_2\sin(B_2+2\pi n/P) - x_{n-2}^2},
e^{A_3\sin(B_3+2\pi n/P) - x_{n-3}^2}\right\} \]
is asymptotically periodic with period $P$.  Note that median is synonymous with 2-rank. It is easily checked that the condition $A_i<0.15< \frac{1}{2} - \frac{\ln 2}{2}$ implies that the functions $f_i(x)=\exp(A_i\sin(B_i+2\pi n/P)-x^2)$ are contractive, so the convergence is implied by Theorem \ref{theorem1}.
\end{example}

Our main convergence result Theorem \ref{theorem1} will be proved as a special case of a more general result, Corollary \ref{mainresult}, which applies to a class of difference equations called sup-contractive. The next example is covered under the sup-contractive hypothesis, which is more general than a fixed $k$-rank.

\begin{example}\label{ex3} Let $f_1,...,f_M : \mathbb{R} \to \mathbb{R}$ be contractions.  Let $P \leq M$ be a positive integer and denote  $\ov{n}=1+(n\mod P)$.  We will show in Example \ref{ex37} that every solution of
\[ x_n = \frac{1}{2} \left( \max_{1\leq i\leq M}\{f_i(x_{n-i})\} - \underset{1\leq i\leq M}{\overline{n}\textup{-rank}}
\{f_i(x_{n-i})\} \right) \]
is asymptotically periodic with period $P$.  
\end{example}

In Section 2 below, we develop some facts about functions that are contractive in the sup-norm.  An important result is Lemma \ref{ranknonexp} which will imply that (\ref{rankeq1}) is sup-contractive.  In Section 3, we show that the general class of sup-contractive recurrence equations converges as desired.  Finally, in Section 4, we return to the max-type equation (\ref{maxeq}), and find the value of the asymptotically convergent periodic orbit for some specific values of M and P.

\begin{section}{Sup-Contractive Functions}
The convergence proofs in the next section apply to a wide class of functions $G: \mathbb{R}^a \to \mathbb{R}^b$, that includes compositions of contractive functions as in (\ref{alpha}) with the max and $k$-rank functions. We will be interested in the operator norm of functions $G$, where the norm used is  the sup norm. Namely, assume there exists $L \geq 0$ such that 
\[ ||G(x) - G( y)||_{\infty} \leq L ||x - y||_{\infty} \]
for all $ x, y \in \mathbb{R}^a$. We can think of $L$ as the Lipschitz constant of $G$ in the sup norm, and will refer to it as the {\it sup-Lipschitz constant} in the following sections. When $L = 1$ we will call $G$ {\it sup-non-expansive} and when $L<1$ we will call $G$ {\it sup-contractive}.  Note that a contraction on $\mathbb{R}$ is sup-contractive since the infinity norm on $\mathbb{R}$ is simply the absolute value.

%
\begin{example} Let $G:\mathbb R^M\to \mathbb R$ be the function $G(x) = c||x||_p$ where $c >0$. Since
\[ |c ||x||_p - c ||y||_p| = c| ||x||_p - ||y||_p | \leq c||x-y||_p \leq c M^{1/p} ||x-y||_{\infty}, \]
$G$ is sup-non-expansive when  $c = M^{-1/p}$ and sup-contractive when  $c < M^{-1/p}$.
\end{example}

\begin{example} For a fixed $z \in \mathbb{R}^M$, define $G:\mathbb R^M\to \mathbb R$ by $G(x) = z^T x$. Since
\[ | z^T x - z^T y| = | z^T( x- y)| = \sum_{i=1}^M |z_i||x_i-y_i| \leq || x-  y||_{\infty} \sum_{i=1}^M |z_i| = || x -  y||_{\infty} || z||_1, \]
$G$ is sup-non-expansive when $|| z||_1= 1$, and sup-contractive when $|| z||_1 < 1$.
\end{example}

\begin{example} For $1\leq k\leq M$, let $R_k : \mathbb{R}^M \to \mathbb{R}$ be the function that returns the $k$-th largest entry of the $M$-dimensional input vector.  Customarily, $R_k$ is called the ``$k$-rank" function. It includes the $\max$ $(k=1)$ and $\min$ $(k=M)$  as special cases. Not surprisingly, the max function is sup-non-expansive. Somewhat more surprising is that this property holds for all $k$, as shown in the next Lemma.
\begin{lemma} \label{ranknonexp} The $k$-rank function $R_k$ is sup-non-expansive.
\end{lemma}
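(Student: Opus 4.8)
The plan is to show directly that $|R_k(x) - R_k(y)| \leq \|x - y\|_\infty$ for all $x, y \in \mathbb{R}^M$. Set $\delta = \|x - y\|_\infty$, so that $y_i - \delta \leq x_i \leq y_i + \delta$ for every coordinate $i$. By symmetry of the claim in $x$ and $y$, it suffices to prove the one-sided bound $R_k(x) \leq R_k(y) + \delta$; swapping the roles of $x$ and $y$ then gives $R_k(y) \leq R_k(x) + \delta$, and the two together yield the conclusion.

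To prove $R_k(x) \leq R_k(y) + \delta$, I would argue via a counting/pigeonhole characterization of the $k$-rank: $R_k(x)$ is the largest value $t$ such that at least $k$ of the coordinates of $x$ are $\geq t$; equivalently, $R_k(x) \leq t$ iff at most $k-1$ coordinates of $x$ strictly exceed $t$. Now let $r = R_k(y)$. By definition of $k$-rank, at most $k - 1$ of the $y_i$ are strictly greater than $r$, so at least $M - k + 1$ of the indices satisfy $y_i \leq r$. For each such index, $x_i \leq y_i + \delta \leq r + \delta$. Hence at least $M - k + 1$ coordinates of $x$ are $\leq r + \delta$, which forces at most $k - 1$ coordinates of $x$ to be $> r + \delta$, and therefore $R_k(x) \leq r + \delta = R_k(y) + \delta$.

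A cleaner way to package the same idea, which I might use instead, is the order-statistic formula $R_k(x) = \min_{|S| = k}\ \max_{i \in S} x_i$, where $S$ ranges over $k$-element subsets of $\{1, \ldots, M\}$ (the $k$-th largest entry equals the smallest possible value of a maximum over a $k$-set). Since both $\min$ and $\max$ are $1$-Lipschitz in the sup norm, and a $\min$ over a finite family of $1$-Lipschitz functions is again $1$-Lipschitz — concretely, for each $S$, $\max_{i\in S} x_i \leq \max_{i \in S} y_i + \delta$, and taking the minimum over $S$ on both sides preserves the inequality — we get $R_k(x) \leq R_k(y) + \delta$ immediately, and symmetry finishes the proof.

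The main thing to get right is not a hard estimate but the correct combinatorial characterization of $R_k$ and the direction of the inequalities: one must be careful about strict versus non-strict comparisons and about ties among coordinates, since several entries may coincide. I expect no serious obstacle beyond bookkeeping; the $\min$–$\max$ representation sidesteps the tie issue entirely, so I would lean toward that formulation. Note the lemma asserts only sup-non-expansiveness ($L = 1$), which is sharp — e.g. $R_k$ of the all-ones vector versus the all-zeros vector changes by exactly $\|x-y\|_\infty$ — so no contractive improvement is available at this level of generality; the contraction will come later from composing with the $f_i$.
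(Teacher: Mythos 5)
Your first argument is correct and complete: the characterization ``$R_k(x)\le t$ if and only if at most $k-1$ coordinates of $x$ strictly exceed $t$'' is valid, it yields the one-sided bound $R_k(x)\le R_k(y)+\|x-y\|_\infty$, and symmetry in $x$ and $y$ finishes the proof. This is a genuinely different packaging from the paper's proof, which assumes without loss of generality that $R_k(y)\le R_k(x)$, lists $M+1$ indices drawn from $\{1,\dots,M\}$ (the $k$ indices realizing the values of $y$ down to $R_k(y)$ together with the $M-k+1$ indices realizing the values of $x$ up from $R_k(x)$), and applies the pigeonhole principle to extract a single common index $t$ with $y_t\le R_k(y)\le R_k(x)\le x_t$, whence $R_k(x)-R_k(y)\le x_t-y_t\le\|x-y\|_\infty$. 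The two proofs are close cousins --- both rest on counting how many coordinates lie on each side of the rank value --- but yours avoids choosing explicit orderings of the coordinates (and so sidesteps any fuss about ties), while the paper's produces a concrete witness coordinate at which the gap between $R_k(x)$ and $R_k(y)$ is realized.

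The ``cleaner'' formulation you say you would lean toward is, however, wrong as stated: $\min_{|S|=k}\max_{i\in S}x_i$ is achieved by taking $S$ to be the indices of the $k$ smallest entries, so it equals the $k$-th \emph{smallest} entry, i.e.\ $R_{M-k+1}(x)$, not $R_k(x)$. (For $M=2$, $k=2$, $x=(1,2)$ it gives $2$, while $R_2(x)=1$.) The correct order-statistic identities are $R_k(x)=\max_{|S|=k}\min_{i\in S}x_i=\min_{|S|=M-k+1}\max_{i\in S}x_i$. With either corrected formula your Lipschitz argument (a max or min over a finite family of $1$-Lipschitz functions is $1$-Lipschitz) goes through verbatim, so the slip is cosmetic rather than structural; but since you explicitly identified ``getting the combinatorial characterization right'' as the main risk, note that this is precisely where the preferred version stumbles. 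Rely on your first argument, or fix the subset size in the second.
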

\begin{proof} Let $ x, y \in \mathbb{R}^M$, and assume, without loss of generality, that $R_k( y) \leq R_k( x)$.  Set $y_{r(i)}$ be the $i$-th largest component of $ y$ and let $x_{s(i)}$ be the $i$-th largest component of $ x$.  Then:
\[ y_{r(1)} \leq y_{r(2)} \leq \cdots \leq y_{r(k)} = R_k( y) \leq R_k( x) = x_{s(k)} \leq x_{s(k+1)} \leq \cdots \leq x_{s(M)} \]
Now examine the list of natural numbers $I = (r(1),...,r(k),s(k),...,s(M))$.  We note that $I$ has length $M+1$ but each entry is chosen from $\{1,...,M\}$.  Thus by the pigeonhole principle at least two of the listed numbers must be the same.  Note that all the $r(1),...,r(k)$ are distinct and all the $s(k),...,s(M)$ are distinct, thus there must be some $1\leq i \leq k$ and some $k \leq j \leq M$ such that $r(i) = s(j) = t$.  Therefore
\[ y_t = y_{r(i)} \leq R_k( y) \leq R_k( x) \leq x_{s(j)} = x_t, \]
which implies that
\[ |R_k( x) - R_k( y)| = R_k( x) - R_k( y) \leq x_t - y_t = |x_t - y_t| \leq  ||x- y||_{\infty} \]
Since the absolute value is the infinity norm on $\mathbb{R}$,
 $R_k$ is sup-non-expansive.
 \end{proof}
\end{example}

The sup-Lipschitz constants are multiplied under composition, based on the next Lemma.
\begin{lemma}\label{supcompositions} Assume $f_1,...,f_k: \mathbb{R}^M \to \mathbb{R}$ have sup-Lipschitz constants less than $L_1$ and $f: \mathbb{R}^k \to \mathbb{R}$ has sup-Lipschitz constant $L_2$, then $g( x) = f(f_1( x),f_2( x),...,f_k( x))$ has sup-Lipschitz  constant no larger than $L_1L_2$.
\end{lemma}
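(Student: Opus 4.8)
The plan is to unwind the definition of the sup-Lipschitz constant directly; nothing beyond the coordinatewise behaviour of the $\infty$-norm is needed. Fix arbitrary $x,y\in\mathbb{R}^M$ and set $u=(f_1(x),\dots,f_k(x))$ and $v=(f_1(y),\dots,f_k(y))$, two points of $\mathbb{R}^k$, so that $g(x)=f(u)$ and $g(y)=f(v)$.

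First I would apply the hypothesis on $f$ to the pair $u,v$, giving $|g(x)-g(y)|=|f(u)-f(v)|\le L_2\|u-v\|_\infty$. Next, I would use that the $\infty$-norm is the maximum of the absolute values of the coordinates, so $\|u-v\|_\infty=\max_{1\le i\le k}|f_i(x)-f_i(y)|$. Applying the sup-Lipschitz bound for each individual $f_i$ gives $|f_i(x)-f_i(y)|\le L_1\|x-y\|_\infty$ for every $i$, hence the maximum over $i$ is also bounded by $L_1\|x-y\|_\infty$. Chaining the two estimates yields $|g(x)-g(y)|\le L_1L_2\|x-y\|_\infty$, and since $x,y$ were arbitrary, $g$ has sup-Lipschitz constant at most $L_1L_2$.

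There is essentially no obstacle here: the content of the lemma is precisely the observation that for the $\infty$-norm the composition bound is clean, because the norm of the vector of differences $f_i(x)-f_i(y)$ is controlled coordinate by coordinate. (For a $p$-norm with $p<\infty$ the same argument would introduce a dimensional factor like $k^{1/p}$, as in the earlier examples; avoiding such constants is exactly why the sup norm is the right choice.) The only minor point worth stating carefully is that the bound "less than $L_1$" for each of the finitely many $f_i$ still produces the uniform estimate $L_1\|x-y\|_\infty$ on the maximum, so the stated conclusion "no larger than $L_1L_2$" follows.
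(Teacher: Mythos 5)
Your proof is correct and follows exactly the paper's own argument: bound $|f(u)-f(v)|$ by $L_2\|u-v\|_\infty=L_2\max_i|f_i(x)-f_i(y)|$ and then apply the sup-Lipschitz bound for each $f_i$. No differences worth noting.
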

\begin{proof} Let $ x,  y \in \mathbb{R}^M$ then:
\begin{eqnarray*} |g( x) - g( y)| &=& | f(f_1( x),...,f_k( x))- f(f_1( y),...,f_k( y))| \\ 
 &\leq& L_2 \max_{1\leq i \leq k} |f_i( x) - f_i( y)| \\ &\leq& L_1L_2 ||x-y||_{\infty} 
\end{eqnarray*}
\end{proof}
Note that if $f$ is sup-contractive, and all $f_i$ are sup-non-expansive then $g$ is sup-contractive.  Similarly if $f$ is sup-non-expansive, and all $f_i$ are sup-contractive then $g$ is sup-contractive.  Finally, the next Lemma shows that if we bundle together functions into a vector, the sup-Lipschitz constant cannot grow.
\begin{lemma} \label{lem26} Assume $f_1,...,f_k:\mathbb{R}^M \to \mathbb{R}$ have sup-Lipschitz constants at most $L$.  Then $f:\mathbb{R}^M\to \mathbb{R}^k$ defined by 
\[ f( x) = (f_1( x),f_2( x),...,f_k( x))\] has sup-Lipschitz constant at most $L$.
\end{lemma}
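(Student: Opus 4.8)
The plan is to unwind the definition of the sup norm on the codomain $\mathbb{R}^k$ and reduce to the componentwise hypotheses. Fix $x,y\in\mathbb{R}^M$. By definition of $f$ and of the $\infty$-norm on $\mathbb{R}^k$,
\[
\|f(x)-f(y)\|_\infty = \|(f_1(x)-f_1(y),\ldots,f_k(x)-f_k(y))\|_\infty = \max_{1\le i\le k}|f_i(x)-f_i(y)|.
\]
Now invoke the hypothesis on each coordinate function: for every $i$ we have $|f_i(x)-f_i(y)|\le L\|x-y\|_\infty$. Since this bound is the same for all $i$, taking the maximum over $i$ on the left does not increase it past the common bound, so $\max_i|f_i(x)-f_i(y)|\le L\|x-y\|_\infty$. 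Combining the two displays gives $\|f(x)-f(y)\|_\infty\le L\|x-y\|_\infty$, which is exactly the claim that $f$ has sup-Lipschitz constant at most $L$.

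There is essentially no obstacle here; the only point requiring any care is the bookkeeping of which norm lives on which space — the $\infty$-norm on $\mathbb{R}^M$ in the domain and the $\infty$-norm on $\mathbb{R}^k$ in the codomain — and the observation that the sup norm is precisely a coordinatewise maximum, so that a bound holding uniformly across coordinates transfers immediately to the vector-valued map. I would also remark, as a corollary worth stating for later use, that combining this lemma with Lemma \ref{supcompositions} lets one chain such bundlings and compositions while only multiplying the individual sup-Lipschitz constants, which is the mechanism by which the rank-type recurrence (\ref{rankeq1}) will be shown to be sup-contractive.
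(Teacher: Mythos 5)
Your proof is correct and is essentially identical to the paper's: both unwind the codomain sup norm as $\max_i|f_i(x)-f_i(y)|$ and apply the common bound $L\|x-y\|_\infty$ to each coordinate. No further comment is needed.
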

\begin{proof} Let $ x, y \in \mathbb{R}^M$ then:
\[ || f( x) -  f( y)||_{\infty} = \max_{1 \leq i \leq k} \{ |f_i( x) - f_i( y)|\} \leq L || x -  y||_{\infty} \]
\end{proof} 
Note that if $k=M$ and $L<1$ then in fact $ f$ is a contraction on $\mathbb{R}^M$, so bundling  functions is a way to build contractions.  Finally, note that these statements could be generalized to allow $f_i$ to be vector valued but this is not necessary for what follows.
\end{section}

\begin{section}{General Case}
We now return to our original recurrence equation
\[ x_n = \underset{1\leq i \leq M}{\textup{$k$-rank}} \{f_i(x_{n-i},n) \}\]
under the assumption that each $f_i:\mathbb{R}\to \mathbb{R}$ is a contraction.
Since $\textup{$k$-rank}$ is sup-non-expansive by Lemma \ref{ranknonexp}, and each $f_i$ is sup-contractive, the composition of the two is sup-contractive by Lemma \ref{supcompositions}.  This is a special case of the equation
\[ x_n = G_{n}(x_{n-1},...,x_{n-M}) \]
where $G_1,...G_{P}$ are any sup-contractive functions and $G_{n+P} = G_n$ for all $n$.  Setting $s = PM$, we will first show that for all initial conditions the limit is periodic of period $s$ by a contraction mapping argument.  We will then use the $P$-periodicity of $G_n$ to show that the limit is actually periodic of period $P$.

With slight abuse of notation we can consider $G_n$ as a function of all $s$ variables, although it depends only on the first M:
\begin{equation}\label{Gdef} x_n = G_{n}(x_{n-1},...,x_{n-s})= G_{n}(x_{n-1},...,x_{n-M}) \end{equation}
where $G_{n+s} = G_{n+MP} = G_n$ for all $n$.  We define 
\[ \ov x_n = (x_{ns+1},...,x_{ns+s}) \]
and let $\ov x_0$ be the initial condition.  We want to show that we can write 
\[ \ov x_{n+1} = \ov F(\ov x_n) \]
where $\ov F: \mathbb{R}^s \to \mathbb{R}^s$ is a contraction in the infinity norm on $\mathbb{R}^s$.  Let $\ov y = (y_1,...,y_s)$ and define the following functions for $k = 1,...,s$.
\begin{eqnarray*} F_1(\ov y) &=& G_1(y_{s},y_{s-1},...,y_{1}) \\
 F_k(\ov y) &=& G_k(F_{k-1}(\ov y),...,F_{1}(\ov y),y_s,y_{s-1},...,y_k) \end{eqnarray*}
Note that this is an inductive definition since $F_k$ depends on $F_1,...,F_{k-1}$.  Finally, we define 
\begin{equation}\label{Fdef} \ov F(\ov y) = (F_1(\ov y),...,F_s(\ov y))\end{equation}
The recursive nature of the definition of $\ov F$ requires the following Lemma to show that $\ov{F}$ encapsulates the evolution of the sequence $\{x_n\}$.
\begin{lemma} Let $G_1,...,G_s:\mathbb{R}^s \to \mathbb{R}$ and let $\{x_i\}$ be defined by \eqref{Gdef}.  Define $\overline{F}$ by \eqref{Fdef}.  Then for all $n$ we have $\ov x_{n+1} = \ov F(\ov x_n)$. \end{lemma}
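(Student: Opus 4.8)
The plan is to prove the identity coordinate by coordinate, via an induction whose ``outer loop'' is vacuous: fix $n\ge 0$ and abbreviate $\ov y=\ov x_n$, so that $y_j=x_{ns+j}$ for $j=1,\dots,s$. The $k$-th coordinate of $\ov x_{n+1}$ is $x_{(n+1)s+k}=x_{ns+s+k}$, and I would show by induction on $k\in\{1,\dots,s\}$ that $x_{ns+s+k}=F_k(\ov y)$. Assembling these $s$ equalities then gives exactly $\ov x_{n+1}=\ov F(\ov x_n)$.

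For the base case $k=1$, apply \eqref{Gdef} at index $ns+s+1$: one has $x_{ns+s+1}=G_{ns+s+1}(x_{ns+s},x_{ns+s-1},\dots,x_{ns+1})$. Since $G_{n+s}=G_n$ for all $n$, we get $G_{(n+1)s+1}=G_1$, while the arguments are precisely $y_s,y_{s-1},\dots,y_1$; hence $x_{ns+s+1}=G_1(y_s,\dots,y_1)=F_1(\ov y)$. For the inductive step, assume $x_{ns+s+j}=F_j(\ov y)$ for all $j<k$. Applying \eqref{Gdef} at index $m=(n+1)s+k$ and using $G_m=G_k$ by periodicity, we obtain $x_{ns+s+k}=G_k(x_{ns+s+k-1},x_{ns+s+k-2},\dots,x_{ns+k})$. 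Now split this list of $s$ arguments at the block boundary $ns+s$: the entries of index exceeding $ns+s$ are $x_{ns+s+k-1},\dots,x_{ns+s+1}$, which by the inductive hypothesis equal $F_{k-1}(\ov y),\dots,F_1(\ov y)$, and the entries of index at most $ns+s$ are $x_{ns+s},x_{ns+s-1},\dots,x_{ns+k}$, i.e.\ $y_s,y_{s-1},\dots,y_k$. Therefore $x_{ns+s+k}=G_k(F_{k-1}(\ov y),\dots,F_1(\ov y),y_s,\dots,y_k)$, which is exactly the defining formula \eqref{Fdef} for $F_k(\ov y)$, completing the induction.

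I do not anticipate any real obstacle: the lemma is in effect a verification that the recursion defining $\ov F$ was reverse-engineered to encode precisely this unrolling of the difference equation over one block of $s$ consecutive steps. The only point demanding care is the index bookkeeping, which I would state explicitly — that the ``new'' arguments $x_{ns+s+1},\dots,x_{ns+s+k-1}$ all lie strictly inside the current block (so the inductive hypothesis applies), that the ``old'' arguments descend only to $x_{ns+1}$ (so they lie in $\ov x_n$), and that the two groups contribute $(k-1)+(s-k+1)=s$ arguments, matching the domain of $G_k$. A remark that the recurrence is well-defined from the initial block $\ov x_0=(x_1,\dots,x_s)$ onward completes the argument.
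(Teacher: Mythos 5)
Your proposal is correct and follows essentially the same route as the paper's proof: an induction on the coordinate index $k$, using the periodicity $G_{(n+1)s+k}=G_k$ and splitting the $s$ arguments of $G_k$ at the block boundary so that the inductive hypothesis handles the entries $x_{ns+s+1},\dots,x_{ns+s+k-1}$ and the remaining entries come from $\ov x_n$. Your explicit index bookkeeping is slightly more careful than the paper's, but the argument is the same.
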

\begin{proof} This is equivalent to showing that $x_{ns+s+k} = F_k(\ov x_n)$ for $k = 1,...,s$ which is equivalent to:
\[ F_k(\ov x_n) = G_k(x_{ns+s+k-1},...,x_{ns+k}) \] 
Note that when $k=1$ we have:
\[ F_1(\ov x_n) = G_1(x_{ns+s},...,x_{ns+1}) = x_{ns+s+1} \]
By definition of $F_1$ and $G_1$.  Then for $2 \leq k \leq s$ we can proceed by induction.  Assuming $x_{ns+s+\overline{k}} = F_{\ov k}(\ov x_n)$ for all $\ov k < k$ we have:
\begin{eqnarray*} F_k(\ov x_n) &=& G_k( F_{k-1}(\ov x_n),...,F_{1}(\ov x_n) ,x_{ns+s},...,x_{ns+k} ) \\
 &=& G_k(x_{ns+s+k-1},...,x_{ns+s+1} ,x_{ns+s},...,x_{ns+k} ) =  x_{ns+s+k} 
 \end{eqnarray*}
Which completes the proof.
\end{proof}
Now that we have defined $\ov F$ we need to show that it is a contraction.  
\begin{theorem}\label{contractiontheorem} Let $G_1,...,G_s:\mathbb{R}^s \to \mathbb{R}$ be sup-contractive and define $\ov{F}$ as in \eqref{Fdef}.  Then $\ov F: \mathbb{R}^s \to \mathbb{R}^s$ is a contraction with respect to the infinity norm.
\end{theorem}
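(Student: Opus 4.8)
The plan is to show that $\overline{F}$ has sup-Lipschitz constant strictly less than $1$. Since $G_1,\ldots,G_s$ are finitely many sup-contractive functions, there is a single constant $L<1$ bounding the sup-Lipschitz constant of every $G_k$. I would prove by induction on $k=1,\ldots,s$ that each coordinate function $F_k:\mathbb{R}^s\to\mathbb{R}$ is sup-contractive with sup-Lipschitz constant at most $L$; granting this, Lemma \ref{lem26} applied to $\overline{F}=(F_1,\ldots,F_s)$ immediately yields that $\overline{F}$ has sup-Lipschitz constant at most $L<1$, i.e. is a contraction on $(\mathbb{R}^s,\|\cdot\|_\infty)$.

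For the base case, $F_1(\overline{y})=G_1(y_s,y_{s-1},\ldots,y_1)$ is the composition of $G_1$ with the coordinate-reversal map, which is an isometry of $(\mathbb{R}^s,\|\cdot\|_\infty)$; hence $F_1$ has the same sup-Lipschitz constant as $G_1$, at most $L$. For the inductive step, suppose $F_1,\ldots,F_{k-1}$ all have sup-Lipschitz constant at most $L\le 1$, hence are sup-non-expansive. The remaining arguments of $G_k$ appearing in the definition of $F_k$ are the coordinate projections $\overline{y}\mapsto y_j$ for $k\le j\le s$, which are also sup-non-expansive. Thus $F_k$ is the composition of the sup-contractive function $G_k$ with a tuple of sup-non-expansive functions, so by Lemma \ref{supcompositions} (and the remark following it) $F_k$ is sup-contractive with sup-Lipschitz constant at most $1\cdot L=L$. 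This completes the induction.

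Assembling the coordinates, Lemma \ref{lem26} gives $\|\overline{F}(\overline{y})-\overline{F}(\overline{y}')\|_\infty=\max_{1\le k\le s}|F_k(\overline{y})-F_k(\overline{y}')|\le L\,\|\overline{y}-\overline{y}'\|_\infty$ for all $\overline{y},\overline{y}'\in\mathbb{R}^s$, with $L<1$, which is the claim. The only subtlety is the recursive dependence of $F_k$ on $F_1,\ldots,F_{k-1}$: a priori one might worry that iterating the substitution through all $s$ stages amplifies the Lipschitz constant. The point that makes the argument go through is that sup-non-expansiveness is ``absorbed'' under composition in the sup norm — feeding maps of sup-Lipschitz constant at most $1$ into a map of sup-Lipschitz constant $L$ produces a map of sup-Lipschitz constant at most $L$, with no degradation — so the bound $L$ propagates unchanged through every level of the recursion.
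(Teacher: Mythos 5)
Your proof is correct and follows essentially the same route as the paper: an induction showing each coordinate function $F_k$ is sup-contractive (using that the projections are sup-non-expansive and invoking Lemma \ref{supcompositions}), followed by Lemma \ref{lem26} to assemble the coordinates. Your explicit tracking of a uniform constant $L<1$ across all $s$ coordinates is a small sharpening that the paper leaves implicit, but it is not a different argument.
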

\begin{proof} By Lemma \ref{lem26} it suffices to show that each $F_i$ is sup-contractive.  Note that the projection function $\pi_i(\ov y) = y_i$ is sup-non-expansive since:
\[ |\pi_i(\ov x) - \pi_i(\ov y)| = |x_i - y_i| \leq ||\ov x - \ov y||_{\infty} \]
Thus $F_1 = G_1(\pi_s(\ov y),...,\pi_1(\ov y))$ is sup-contractive by Lemma \ref{supcompositions} because $G_1$ is sup-contractive and $\pi_i$ is sup-non-expansive.  For $1<i\leq s$ we proceed by induction.  Assume $F_j$ is sup-contractive for all $j < i$, then: 
\[ F_i(\ov y) = G_i(F_{i-1}(\ov y),...,F_{1}(\ov y),\pi_s(\ov y), \pi_{s-1}(\ov y),...,\pi_i(\ov y)) \]
So $F_i$ is a composition of the sup-contractive function $G_i$ with sup-contractive functions $F_{i-1},...,F_1$ and sup-non-expansive projection functions $\pi_s,...,\pi_i$.  Thus by Lemma \ref{supcompositions}, $F_i$ is sup-contractive, which completes the induction and thus shows that $\ov F$ is a contraction.
\end{proof}
\begin{corollary}\label{PMperiodicity}  Let $G_1,...,G_P:\mathbb{R}^M \to \mathbb{R}$ be sup-contractive and let $G_{n+P} = G_P$ for all $n$.  Given an initial condition $(x_1,...,x_M)$ let
\[ x_n = G_n(x_{n-1},...,x_{n-M}) \]
then $x_n$ converges to a unique $PM$-periodic orbit independent of initial conditions. 
\end{corollary}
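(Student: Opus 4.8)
The plan is to apply the Banach contraction mapping theorem to the map $\ov F$ and then unwind the conclusion back to the original sequence. First I would set $s=PM$ and observe that, since $P$ divides $s$, the periodicity hypothesis $G_{n+P}=G_n$ gives $G_{n+s}=G_n$ for all $n$; hence the $G_n$ are determined by $G_1,\ldots,G_s$, and each may be regarded (by Lemma \ref{supcompositions}, precomposing with the sup-non-expansive coordinate projections $\pi_j$) as a sup-contractive function $\mathbb{R}^s\to\mathbb{R}$ depending only on its first $M$ arguments, exactly as in \eqref{Gdef}. The lemma above establishing $\ov x_{n+1}=\ov F(\ov x_n)$ then applies to the blocks $\ov x_n=(x_{ns+1},\ldots,x_{ns+s})$, and Theorem \ref{contractiontheorem} says $\ov F\colon\mathbb{R}^s\to\mathbb{R}^s$ is a contraction in $\|\cdot\|_\infty$.

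Since $(\mathbb{R}^s,\|\cdot\|_\infty)$ is complete, the contraction mapping theorem supplies a unique fixed point $\ov x^\ast=(x^\ast_1,\ldots,x^\ast_s)$ of $\ov F$, and $\ov x_n=\ov F^{\,n}(\ov x_0)\to\ov x^\ast$ for every initial block $\ov x_0$. Convergence of the blocks in the sup norm means precisely that $\sup_{1\le j\le s}|x_{ns+j}-x^\ast_j|\to 0$ as $n\to\infty$; equivalently, letting $z$ be the $s$-periodic sequence with $z_j=x^\ast_j$ for $1\le j\le s$, we get $x_m-z_m\to0$ as $m\to\infty$. Thus every solution is asymptotically $s$-periodic with the same limiting sequence $z$, which already yields independence of initial conditions.

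It remains to check that $z$ is itself a solution of the recurrence, i.e.\ a genuine $PM$-periodic orbit. For this I would pass to the limit in the identity $x_{ns+j}=G_j(x_{ns+j-1},\ldots,x_{ns+j-M})$, valid for all $n$ and all $j$ by the $s$-periodicity of the $G$'s: the left side tends to $z_j$, and since each $G_j$ is Lipschitz, hence continuous, the right side tends to $G_j(z_{j-1},\ldots,z_{j-M})$, where for indices $\le 0$ one uses that $x_{ns+j}$ lies in block $n-1$ and converges to $z_{j+s}=z_j$. Hence $z_j=G_j(z_{j-1},\ldots,z_{j-M})$ for all $j$, so $z$ solves the recurrence; its block is the fixed point $\ov x^\ast$, and conversely every $PM$-periodic solution has a constant block which is a fixed point of $\ov F$, so uniqueness of the fixed point shows $z$ is the unique such periodic orbit.

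There is no real obstacle beyond Theorem \ref{contractiontheorem}: the only points needing care are the index bookkeeping when translating between $\{x_n\}$ and its blocks $\{\ov x_n\}$ (in particular the arguments $x_{ns+j}$ with $j\le 0$, which belong to the previous block), and the observation that ``the block limit is a fixed point of $\ov F$'' upgrades to ``$z$ is an honest $s$-periodic solution'' precisely because the $G_n$ are $s$-periodic --- this is where periodicity of the forcing enters.
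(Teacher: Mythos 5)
Your proposal is correct and follows essentially the same route as the paper: set $s=PM$, package the sequence into blocks $\ov x_n$, invoke Theorem \ref{contractiontheorem} to see that $\ov F$ is a contraction in the sup norm, and apply the Contraction Mapping Theorem to obtain a unique fixed point to which all orbits converge. Your additional checks --- unwinding block convergence into $x_m - z_m \to 0$ and passing to the limit in the recurrence to confirm the limit is a genuine $s$-periodic solution --- are details the paper leaves implicit, not a different argument.
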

\begin{proof} By Theorem \ref{contractiontheorem} we can construct a contraction mapping $\ov F: \mathbb{R}^s \to \mathbb{R}^s$ such that $\ov x_{n+1} = \ov F(\ov x_n)$.  Thus, by the Contraction Mapping Theorem, $\ov F$ has a unique fixed point $x^* \in \mathbb{R}^s$ and for any $\ov x_0 \in \mathbb{R}^s$ we have $\lim_{n\to\infty}\ov x_n = x^*$.
\end{proof}
Note that $s$ may not be the prime period (smallest possible period).  So we now show that in fact $x_n$ must be asymptotically periodic of period $P$.  
\begin{theorem}\label{Pperiodicity} Let $G_1,...,G_P:\mathbb{R}^M \to \mathbb{R}$ be sup-contractive and define $\ov{F}$ as in \eqref{Fdef}.  Let $x^*$ be the unique fixed point of $\ov F$.  Then $ x^*$ is periodic of period $P$.
\end{theorem}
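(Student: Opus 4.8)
The plan is to combine the $P$-periodicity of the forcing with the \emph{uniqueness} of the fixed point supplied by Theorem \ref{contractiontheorem}. First I would observe that $x^*=(x^*_1,\dots,x^*_s)$ is just one period of an $s$-periodic solution of the recurrence. Indeed, if $\{x_n\}$ is generated by \eqref{Gdef} from the initial block $\ov x_0=x^*$, then $\ov F(\ov x_0)=\ov x_0$ together with the lemma $\ov x_{n+1}=\ov F(\ov x_n)$ forces $\ov x_n=x^*$ for every $n$, so $x_{n+s}=x_n$ for all $n$. Since $s=PM$ is a multiple of $P$, the forcing also has period $s$, and one checks (using the fixed-point equations $F_k(x^*)=x^*_k$ to handle the small indices) that $x_n=G_n(x_{n-1},\dots,x_{n-M})$ holds at every index.

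Next I would pass to the shifted sequence $y_n:=x_{n+P}$. It is again $s$-periodic, and because $G_{n+P}=G_n$ it obeys the same recurrence:
\[ y_n=x_{n+P}=G_{n+P}(x_{n+P-1},\dots,x_{n+P-M})=G_n(y_{n-1},\dots,y_{n-M}). \]
Hence, by the lemma preceding Theorem \ref{contractiontheorem} applied to $\{y_n\}$, the block $\ov y_0:=(y_1,\dots,y_s)=(x_{P+1},\dots,x_{P+s})$ satisfies $\ov F(\ov y_0)=(y_{s+1},\dots,y_{2s})=\ov y_0$, the last equality by $s$-periodicity of $\{y_n\}$. So $\ov y_0$ is also a fixed point of $\ov F$.

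Finally, Theorem \ref{contractiontheorem} and the Contraction Mapping Theorem give that $\ov F$ has exactly one fixed point, whence $\ov y_0=x^*$. Reading off coordinates yields $x^*_{j+P}=x^*_j$ for all $j$ (extending by $s$-periodicity where needed), i.e.\ $x^*$ has period $P$. The only delicate point is the bookkeeping in the first step: verifying that the periodic extension of the fixed point genuinely satisfies the recurrence at \emph{every} index, so that the shift by $P$ carries fixed points of $\ov F$ to fixed points of $\ov F$ — and this is precisely where the two hypotheses $s=PM$ and $G_{n+P}=G_n$ enter. Once that is in place, the conclusion is immediate from uniqueness.
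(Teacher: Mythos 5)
Your proof is correct and is essentially the paper's argument: both exploit that, because $G_{n+P}=G_n$ and $s=PM$ is a multiple of $P$, the shift by $P$ carries solutions of the recurrence to solutions, hence carries the unique fixed point of $\ov F$ to another fixed point, which by uniqueness must be $x^*$ itself. The only difference is packaging --- the paper proves that a shift operator commutes with $\ov F$ by a component-wise induction and then passes to a limit, whereas you extend $x^*$ to an $s$-periodic solution, reuse the block lemma on the shifted sequence, and invoke uniqueness directly, a slightly tidier route to the same conclusion.
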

\begin{proof} Define a shift operator by $S(\ov x_n) = (x_{ns+1+P},...,x_{ns+s+P})$.  We will use the fact that $G_i$ is actually periodic of period $P$ to show that the function $S$ commutes with $\ov F$.  Note that
\begin{eqnarray*} S(\ov F(\ov x_n)) &=& S(\ov x_{n+1}) = (x_{ns+s+1+P},...,x_{ns+s+s+P})  \\
  \ov F(S(\ov x_n)) &=& \ov F(x_{ns+1+P},...,x_{ns+s+P}) 
\end{eqnarray*}
First we examine the first component of $\ov F(S(\ov x_n))$:
\begin{eqnarray*} ( \ov F(S(\ov x_n)))_1 &=& G_1(x_{ns+s+P},...,x_{ns+1+P}) \\
 &=& G_{1+P}(x_{ns+s+P},...,x_{ns+1+P}) = x_{ns+s+1+P} 
 \end{eqnarray*}
where the second equality comes from the fact that $G_i$ is periodic of period P.  This shows that the first components of $S(\ov F(\ov x_n))$ and $\ov F(S(\ov x_n))$ are the same.  We proceed inductively to show that all the components are the same.  Assume that $(S(\ov F(\ov x_n)))_j = (\ov F(S(\ov x_n)))_j$ for all $j<i$.  Then:
\begin{eqnarray*} ( \ov F(S(\ov x_n)))_i &=& G_i(F_{i-1}(S(\ov x_n)),...,F_1(S(\ov x_n)),x_{ns+s+P},...,x_{ns+s+P + i-s}) \\
&=& G_{i+P}(x_{ns+s+i-1+P},...,x_{ns+s+1+P},x_{ns+s+P},...,x_{ns+s+P + i-s}) \\
&=& x_{ns+s+i+P} 
\end{eqnarray*}
where we have again used the P-periodicity if $G_i$ to conclude $G_i = G_{i+P}$.  This shows that
\[ S(\ov F(\ov x_n)) =  \ov F(S(\ov x_n)) \]
So inductively we have $S(\ov x_n) = \ov F^n(S(\ov x_0))$.  Let $x^*$ be the unique fixed point of $\ov F$ and define two sequences, the first with $x_0 = x^*$ and the second with $y_0 = S(x^*)$.  Note that
\[  \lim_{n\to\infty} y_n = \lim_{n\to\infty} \ov F^n(\ov y_0) = x^* \]
since all initial conditions converge to $x^*$, and at the same time:
\[  \lim_{n\to\infty} y_n = \lim_{n\to\infty} \ov F^n(\ov y_0) = \lim_{n\to\infty} \ov F^n(S(x_0)) = \lim_{n\to\infty} \ov S(\ov F_n(x_0))= \lim_{n\to\infty} S(x_0) = S(x^*) \]
So we conclude that $S(x^*) = x^*$ and thus $x^*$ is periodic of period $P$. 
\end{proof}
\begin{corollary}\label{mainresult} Let $G_1,...,G_P:\mathbb{R}^M \to \mathbb{R}$ be sup-contractive and let $G_{n+P} = G_P$ for all $n$.  Given an initial condition $(x_1,...,x_M)$ let
\[ x_n = G_n(x_{n-1},...,x_{n-M}) \]
then $x_n$ converges to a unique $P$-periodic orbit independent of initial conditions. 
\end{corollary}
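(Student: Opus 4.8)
The plan is to read the corollary off directly from the two preceding results, Corollary~\ref{PMperiodicity} and Theorem~\ref{Pperiodicity}, followed by a short bookkeeping step that transfers statements about the $\mathbb{R}^s$-valued iteration $\ov x_{n+1}=\ov F(\ov x_n)$ (with $s=PM$) back to the scalar sequence $\{x_n\}$.

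First I would apply Corollary~\ref{PMperiodicity}: because each $G_i$ is sup-contractive, the map $\ov F:\mathbb{R}^s\to\mathbb{R}^s$ of \eqref{Fdef} is a contraction in the infinity norm by Theorem~\ref{contractiontheorem}, so by the Contraction Mapping Theorem it has a unique fixed point $x^*=(x^*_1,\dots,x^*_s)$ and $\ov x_n\to x^*$ for \emph{every} initial condition $(x_1,\dots,x_M)$. This already supplies the uniqueness of the limiting orbit and its independence from the initial sequence; what remains is to sharpen the period from $s$ to $P$.

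Next I would invoke Theorem~\ref{Pperiodicity}, which (using the $P$-periodicity $G_{n+P}=G_n$) gives $S(x^*)=x^*$ for the shift operator $S$. Unwinding the definition of $S$, this says exactly that the sequence generated from $\ov x_0=x^*$ satisfies $x_{j+P}=x_j$; since that sequence is also $s$-periodic and $P\mid s$, the vector $x^*$ is obtained by repeating its first $P$ coordinates $M$ times. I would then set $y_n$ to be the $P$-periodic sequence with $y_j=x^*_j$ for $1\le j\le P$, so that in fact $y_j=x^*_j$ for all $j=1,\dots,s$.

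For the transfer step: writing $n=ms+j$ with $1\le j\le s$, the scalar $x_n$ is precisely the $j$-th coordinate of $\ov x_m$, which tends to $x^*_j=y_j=y_n$ as $m\to\infty$; hence $x_n-y_n\to 0$ and $\{x_n\}$ converges to the period-$P$ orbit $(y_1,\dots,y_P)$, which is independent of initial conditions. I do not expect a genuine obstacle here — the substantive work is already contained in Theorems~\ref{contractiontheorem} and~\ref{Pperiodicity}. The one point demanding care is the index bookkeeping: keeping the scalar index $n$, the block index $m$ in $\ov x_m$, and the shift-by-$P$ built into $S$ mutually consistent, and verifying that $S(x^*)=x^*$ really does force the coordinatewise period-$P$ structure (rather than something weaker) via $P\mid s$.
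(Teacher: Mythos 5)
Your proposal is correct and follows the paper's own route exactly: the paper's proof of Corollary~\ref{mainresult} is a two-line deduction from Theorem~\ref{Pperiodicity} (convergence of $\ov x_n$ to the unique fixed point $x^*$, which is $P$-periodic), with the block-to-scalar index transfer left implicit. You simply make that bookkeeping explicit, which is fine.
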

\begin{proof} By Theorem \ref{Pperiodicity} there exists a unique $x^* \in \mathbb{R}^s$ which is $P$-periodic such that $\lim_{n\to\infty} \ov x_n = x^*$.  Thus $x_n$ is asymptotically periodic of period P.
\end{proof} 
We conclude that $x_n$ approaches a unique periodic orbit, for any initial condition, whose period is equal to the forcing period $P$.  The periodicity of the rank-type equation (\ref{rankeq1}) is now an easy Corollary.
\begin{corollary}\label{rankResult} For $i=1,...,M$ let $f_i(x,n):\mathbb{R}\times\mathbb{N}\to\mathbb{R}$ be contractive in $x$ and $P$-periodic in $n$.  Given an initial condition $(x_1,...,x_M)$ and $k \in \{1,...,M\}$ let
\[ x_n = \textup{k-rank}\{f_i(x_{n-i},n)\} \]
then $x_n$ converges to a unique $P$-periodic orbit independent of initial conditions.
\end{corollary}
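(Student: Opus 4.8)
The plan is to recognize \eqref{rankeq1} as a particular instance of the sup-contractive periodic recurrence already handled in Corollary \ref{mainresult}, so that the proof reduces to a verification of hypotheses. For each fixed $n$, define $G_n : \mathbb{R}^M \to \mathbb{R}$ by
\[
  G_n(z_1,\dots,z_M) = R_k\bigl(f_1(z_1,n),\dots,f_M(z_M,n)\bigr),
\]
so that the recurrence in the statement is exactly $x_n = G_n(x_{n-1},\dots,x_{n-M})$ in the notation of Corollary \ref{mainresult}. The $P$-periodicity hypothesis $f_i(x,n+P)=f_i(x,n)$ gives $G_{n+P}=G_n$, so the family $\{G_n\}$ is $P$-periodic in $n$ and is determined by $G_1,\dots,G_P$.

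Next I would check that each $G_n$ is sup-contractive. Fix $n$ and, for $1\le i\le M$, consider the $i$-th input map $\Phi_i : \mathbb{R}^M \to \mathbb{R}$, $\Phi_i(z_1,\dots,z_M) = f_i(z_i,n)$, which depends only on the $i$-th coordinate. The contraction hypothesis \eqref{alpha} yields
\[
  |\Phi_i(z) - \Phi_i(w)| = |f_i(z_i,n) - f_i(w_i,n)| \le \alpha\,|z_i - w_i| \le \alpha\,\|z - w\|_\infty ,
\]
so $\Phi_i$ has sup-Lipschitz constant at most $\alpha < 1$. By Lemma \ref{ranknonexp}, $R_k$ is sup-non-expansive, i.e.\ has sup-Lipschitz constant $1$. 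Since $G_n = R_k \circ (\Phi_1,\dots,\Phi_M)$, Lemma \ref{supcompositions} bounds the sup-Lipschitz constant of $G_n$ by $\alpha \cdot 1 = \alpha < 1$; hence $G_n$ is sup-contractive, with the same constant for every $n$.

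With these two observations in hand, Corollary \ref{mainresult} applies directly: for any initial sequence $(x_1,\dots,x_M)$ the solution of $x_n = G_n(x_{n-1},\dots,x_{n-M})$ converges to a unique $P$-periodic orbit, independent of the initial condition. Unwinding the definition of $G_n$, this is precisely the conclusion of the Corollary.

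I do not expect a genuine obstacle here, since all of the substantive content has already been carried out in Lemmas \ref{ranknonexp} and \ref{supcompositions} and in Corollary \ref{mainresult}. The only point needing a little care is the bookkeeping that turns the one-variable contractions $f_i(\cdot,n)$ into $M$-variable maps whose sup-Lipschitz constant is still at most $\alpha$, together with the observation that the finitely many maps $G_1,\dots,G_P$ share a common contraction constant --- which is automatic because the bound $\alpha$ in \eqref{alpha} is uniform in both $i$ and $n$.
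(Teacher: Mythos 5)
Your proposal is correct and follows essentially the same route as the paper: it verifies that each $G_n$ is sup-contractive by composing the sup-non-expansive $k$-rank function (Lemma \ref{ranknonexp}) with the contractions $f_i(\cdot,n)$ via Lemma \ref{supcompositions}, and then invokes Corollary \ref{mainresult}. Your version merely spells out the bookkeeping (the coordinate maps $\Phi_i$ and the uniformity of $\alpha$) a bit more explicitly than the paper does.
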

\begin{proof} Recall that by Lemma \ref{ranknonexp}, the $\textup{k-rank}$ function is sup-non-expansive, and each $f_i(x,n)$ is sup-contractive in $x$ so by Lemma \ref{supcompositions}, the composition is sup-contractive.  By Corollary \ref{mainresult}, $x_n$ is asymptotically periodic of period P.
\end{proof}
We can now return to the equation from Example \ref{ex3}.
\begin{example} \label{ex37}
Let $f_1,...,f_M : \mathbb{R} \to \mathbb{R}$ be contractions.  Let $P \leq M$ be a positive integer and set $\ov{n}=1+(n\mod P)$.  Let
\[ x_n = G_n(x_{n-1},...,x_{n-M}) = \frac{1}{2} \left( \max\{f_i(x_{n-i})\} - \overline{n}\textup{-rank}\{f_i(x_{n-i})\} \right) \]
Recall that by Lemma \ref{ranknonexp} the rank functions are all sup-non-expansive.  Thus $\ov n\textup{-rank}\{f_i(x_{n-i})\}$ is a composition of a sup-non-expansive function with the sup-contractive functions $f_i$, and thus the composition is sup-contractive by Lemma \ref{supcompositions}.  Furthermore, setting $\ov z = (1/2,-1/2)$, we see that $||\ov z||_1 = 1$ so $f(x) = z^T x$ is sup-non-expansive. Therefore
\begin{eqnarray*} G_n(y_1,...,y_M) &=& z^T \left(\max\{f_i(y_{i})\},\overline{n}\textup{-rank}\{f_i(y_{i})\right) \\
&=& \frac{1}{2} \left( \max\{f_i(y_{i})\} - \overline{n}\textup{-rank}\{f_i(y_{i})\} \right)
\end{eqnarray*}
is sup-contractive for all $n$.  By Corollary \ref{mainresult}, $x_n$ is asymptotically periodic with period $P$.
\end{example}
\end{section}

\begin{section}{Finding the Periodic Limit}
We now return to the max-type equation \eqref{maxeq} and rank-type equation \eqref{rankeq1} and attempt to find closed-form solutions.  The closed-form solution to the autonomous contractive rank-type equation was first given in \cite{sauerRank}, and we are able to reprove this result as a special case.  However, we will see that finding a closed formula for the limit under periodic forcing is in general more difficult.   
We will need two lemmas about contractions on $\mathbb{R}$.
\begin{lemma} \label{lem1} Let $f: \mathbb{R} \to \mathbb{R}$ be a contraction with fixed point $r$ then $x > r$ implies $f(x) < x$, and $x < r$ implies $f(x) > x$.
\end{lemma}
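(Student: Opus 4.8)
The plan is to use the defining contraction inequality with the pair $(x,r)$ and exploit the fact that $r$ is a fixed point, so $f(r)=r$. First I would write $|f(x)-r| = |f(x)-f(r)| \le \alpha|x-r|$ for some contraction constant $\alpha<1$; this immediately gives the two-sided bound $r-\alpha|x-r| \le f(x) \le r+\alpha|x-r|$. The point is that $f(x)$ is trapped within distance $\alpha|x-r|$ of $r$, which is strictly less than $|x-r|$ whenever $x\neq r$.

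Next I would split into the two cases. If $x>r$, then $|x-r| = x-r$, so the upper bound reads $f(x) \le r+\alpha(x-r)$. Since $\alpha<1$ and $x-r>0$, we have $r+\alpha(x-r) < r+(x-r) = x$, hence $f(x)<x$. Symmetrically, if $x<r$, then $|x-r| = r-x$, so the lower bound reads $f(x) \ge r-\alpha(r-x)$, and since $\alpha<1$ and $r-x>0$ we get $r-\alpha(r-x) > r-(r-x) = x$, hence $f(x)>x$. Both claims follow.

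There is no real obstacle here: the argument is a two-line consequence of the contraction property once one writes the inequality against the fixed point and keeps track of the sign of $x-r$. The only thing to be slightly careful about is ensuring strict inequality, which is where $\alpha<1$ (as opposed to merely $\alpha\le 1$) together with $x\neq r$ is used; this matches the hypothesis that $f$ is a contraction in the sense of \eqref{alpha}. I would also remark, for use later in the section, that the same computation shows $f(x)$ lies strictly between $x$ and $r$ (on the appropriate side), which is the geometric content being recorded.
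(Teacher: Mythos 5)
Your proof is correct and follows essentially the same route as the paper's: both compare $f(x)$ to the fixed point via the contraction inequality $|f(x)-r|=|f(x)-f(r)|<|x-r|$ and then use the sign of $x-r$ to conclude. The only cosmetic difference is that you carry the explicit constant $\alpha<1$ through the computation, while the paper works directly with the strict inequality; the content is identical.
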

\begin{proof} 
Since $f(r)=r$ and $f$ is a contraction, $|f(x)-r|<|x-r|$. If $x>r$, then
\[f(x)-r\leq |f(x)-r|<|x-r|=x-r,\]
so $f(x)<x$. If $x<r$, then
\[r-f(x)\leq |f(x)-r|<|x-r|=r-x,\]
so $f(x)>x$.
\end{proof}
\begin{lemma} \label{lem21} Let $f,g: \mathbb{R} \to \mathbb{R}$ be contractions, and assume that $r_1, r_2, r_3, r_4$ satisfy $f(r_2)=r_1<r_3$ and $g(r_1)=r_2<r_4$. Then either $f(r_4)<r_3$ or $g(r_3)<r_4$.
\end{lemma}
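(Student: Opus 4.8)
The plan is to argue by contradiction: suppose that \emph{both} alternatives fail, i.e.\ $f(r_4)\geq r_3$ and $g(r_3)\geq r_4$ simultaneously, and derive an impossibility from the contraction hypotheses together with the given relations $f(r_2)=r_1<r_3$ and $g(r_1)=r_2<r_4$.

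First I would feed the pair $r_4>r_2$ into the contraction $f$. Contractiveness gives $|f(r_4)-f(r_2)|<r_4-r_2$, that is, $|f(r_4)-r_1|<r_4-r_2$. On the other hand, the standing assumption $f(r_4)\geq r_3>r_1$ forces $f(r_4)-r_1\geq r_3-r_1>0$, so combining the two yields $r_3-r_1<r_4-r_2$.

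Next I would carry out the symmetric computation with the pair $r_3>r_1$ and the contraction $g$: contractiveness gives $|g(r_3)-g(r_1)|<r_3-r_1$, i.e.\ $|g(r_3)-r_2|<r_3-r_1$, while the assumption $g(r_3)\geq r_4>r_2$ forces $g(r_3)-r_2\geq r_4-r_2>0$; together these give $r_4-r_2<r_3-r_1$. This contradicts the inequality obtained in the previous step, so at least one of $f(r_4)<r_3$ and $g(r_3)<r_4$ must hold.

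I do not expect a genuine obstacle here. The only point requiring care is orienting each contraction inequality correctly --- applying $f$ to the ``outer'' pair $(r_4,r_2)$ and $g$ to the ``outer'' pair $(r_3,r_1)$ --- and invoking the strict hypotheses $r_1<r_3$, $r_2<r_4$ so that the bounds $|f(r_4)-r_1|<r_4-r_2$ and $|g(r_3)-r_2|<r_3-r_1$ are strict; that strictness is exactly what renders the two resulting inequalities incompatible. If one prefers to use a uniform bound $|f(x)-f(y)|\leq\alpha|x-y|$ with $\alpha<1$, the same chain gives $r_3-r_1\leq\alpha(r_4-r_2)\leq\alpha^2(r_3-r_1)$, again impossible since $r_3-r_1>0$.
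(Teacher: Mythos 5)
Your proof is correct and uses essentially the same argument as the paper: the paper assumes $g(r_3)\geq r_4$ and chains the same two contraction inequalities to conclude $f(r_4)<r_3$ directly, while you package the identical estimates as a contradiction from assuming both alternatives fail. The content and the key inequalities $r_4-r_2<r_3-r_1$ and $r_3-r_1<r_4-r_2$ are the same in both versions.
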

\begin{proof} 
Assume $g(r_3)\geq r_4$. Then
\[ r_4-r_2=r_4-g(r_1)\leq g(r_3)-g(r_1) < r_3-r_1.\]
The contractivity of $f$ yields
\[f(r_4)-r_1\leq |f(r_4)-r_1|=|f(r_4)-f(r_2)|< |r_4-r_2|=r_4-r_2< r_3-r_1\]
which implies that $f(r_4)<r_3$.
\end{proof}

First, we can use Lemma \ref{lem1} to find the explicit solution of the autonomous equation \eqref{rankeq} where each $f_i$ is a contraction with fixed point $r_i$.  The following result, first proved in \cite{sauerRank}, shows that every initial condition converges to the constant solution $\textup{$k$-rank}\{r_i\}$.  
\begin{theorem}\label{periodone} 
 Let $f_i:\mathbb{R}\to\mathbb{R}$ be a contraction with fixed point $r_i$ for $1\leq i \leq M$ and set $ x_{n} = \textup{$k$-rank} \{ f_i(x_{n-i})\}$.  Then $\lim_{n\to\infty} x_n = \textup{k-rank} \{r_i\}$.
\end{theorem}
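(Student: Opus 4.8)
The plan is to avoid a from-scratch sandwiching argument and instead piggyback on the general machinery already developed. Since the $k$-rank function is sup-non-expansive (Lemma~\ref{ranknonexp}) and each $f_i$ is a contraction on $\mathbb{R}$, the recurrence $x_n = \textup{$k$-rank}\{f_i(x_{n-i})\}$ is the $P=1$ instance of Corollary~\ref{rankResult}, so it already converges to a \emph{unique constant} limit $r_*$ that is independent of the initial sequence. Therefore it suffices to produce one solution whose limit we can read off: I would show that the constant sequence $x_n\equiv c$, where $c:=\textup{$k$-rank}\{r_i\}$, actually solves the recurrence, i.e.\ that $\textup{$k$-rank}\{f_1(c),\ldots,f_M(c)\}=c$. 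Granting this, the constant solution $c$ converges (trivially) to $c$, and by the uniqueness just quoted every solution converges to $c=\textup{$k$-rank}\{r_i\}$, which is the claim.

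To check that $c$ is a fixed point of the rank map, relabel the indices so that $r_1\ge r_2\ge\cdots\ge r_M$; then $c=r_k$. Applying Lemma~\ref{lem1} with $x=c$ and fixed point $r_i$: if $r_i>c$ then $c<r_i$ gives $f_i(c)>c$; if $r_i<c$ then $c>r_i$ gives $f_i(c)<c$; and if $r_i=c$ then $f_i(c)=f_i(r_i)=r_i=c$. Thus the sign of $f_i(c)-c$ agrees exactly with the sign of $r_i-c$: the values $f_i(c)$ that strictly exceed $c$ are precisely those with $r_i>c$, and the values $f_i(c)$ that are $\ge c$ are precisely those with $r_i\ge c$.

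Now I would finish with a counting argument. Because the $r_i$ are sorted decreasingly and $r_k=c$, the set $\{i:r_i>c\}$ is contained in $\{1,\ldots,k-1\}$, so it has size $a\le k-1$; meanwhile $r_1,\ldots,r_k$ all satisfy $r_i\ge c$, so $\{i:r_i\ge c\}$ has size $b\ge k$. Translating through the previous paragraph, among $f_1(c),\ldots,f_M(c)$ there are exactly $a\le k-1$ values above $c$ and exactly $b\ge k$ values at least $c$, hence $b-a\ge k-a$ values equal to $c$. Sorting in decreasing order, positions $1,\ldots,a$ hold the values $>c$ and positions $a+1,\ldots,b$ all hold $c$; since $a+1\le k\le b$, the $k$-th largest value is exactly $c$. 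The only genuine work lies in this bookkeeping step — pinning the sign of $f_i(c)-c$ to that of $r_i-c$ via Lemma~\ref{lem1} and then matching the two index counts against $k$ — while the existence and uniqueness of the limit is entirely outsourced to the contraction-mapping results of Section~3.
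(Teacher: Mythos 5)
Your proof is correct and follows essentially the same route as the paper's: convergence to a unique constant limit is outsourced to Corollary~\ref{rankResult} with $P=1$, and the identification of the limit reduces to checking via Lemma~\ref{lem1} that $c=\textup{$k$-rank}\{r_i\}$ is a fixed point of the rank map. Your counting step is in fact slightly more careful than the paper's (you sort decreasingly, matching the ``$k$-th largest'' convention, and handle ties $r_i=c$ explicitly), but the underlying argument is the same.
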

\begin{proof} Note that this recurrence is autonomous and thus $P=1$ so by Corollary \ref{rankResult} every initial condition converges to a unique constant (period one) solution $x^*$.  Thus it remains only to show that $x^* = \textup{$k$-rank}\{r_i\}$ is a fixed point of the recurrence.  Let $\sigma: \{1,...,M\} \to \{1,...,M\}$ be a permutation such that:
\[ r_{\sigma(1)} \leq r_{\sigma(2)} \leq \cdots \leq r_{\sigma(M)} \]
So the constant solution should be $x^* = r_{\sigma(k)}$.  Assume $x_1 = x_2 = \cdots = x_M = x^*$.  Then: 
\begin{eqnarray*} x_{M+1} &=& \textup{$k$-rank}\{f_1(x^*),...,f_{\sigma(k)}(x^*),...,f_M(x^*)\}  \\
&=& \textup{$k$-rank}\{f_1(x^*),...,x^*,...,f_M(x^*)\} 
\end{eqnarray*}
We want to show that $x_{M+1} = x^*$.  Note that when $i < k$ we have $r_{\sigma(i)} \leq r_{\sigma(k)} = x^*$ so by Lemma 3.1 we have $f_{\sigma(i)}(x^*) \leq x^*$.  Similarly when $i > k$ we have $r_{\sigma(i)} \geq r_{\sigma(k)} = x^*$ so by Lemma 3.1 we have $f_{\sigma(i)}(x^*) \geq x^*$.  Thus we have:
\[ f_{\sigma(1)}(x^*),...,f_{\sigma(k-1)}(x^*) \leq x^* \leq f_{\sigma(k+1)}(x^*),...,f_{\sigma(M)}(x^*) \]
Thus $x_{M+1} = x^*$ so  $x^*$ is a fixed point of the recurrence and therefore it is the unique limit for any initial condition.
\end{proof}
Theorem \ref{periodone} gives the complete solution to the rank-type equation \eqref{rankeq1} in the period one case.  We now turn to the case of period two forcing and restrict our attention to the max-type equation \eqref{maxeq}, for which it is possible to find a closed-form solution.    This solution gives insight into the complexity of solutions to \eqref{maxeq} when the forcing period is large.  
For $M=P=2$,  the equation (\ref{rankeq1}) becomes the period two recurrence
\begin{eqnarray}\label{tte1}
 x_{2i} &=& \max \{ f_1(x_{2i-1}) , f_2(x_{2i-2}) \} \nonumber \\
 x_{2i+1} &=& \max \{ g_1(x_{2i}) , g_2(x_{2i-1}) \} 
\end{eqnarray}
where $f_1, f_2, g_1$ and $g_2$ are contractions.  
%
We can denote the fixed points
\begin{eqnarray*}
 f_1(g_1(r_1)) &=& f_1(r_2) = r_1 \\
 g_1(f_1(r_2)) &=& g_1(r_1) =  r_2 \\
  f_2(r_3) &=& r_3 \\
  g_2(r_4) &=& r_4. 
 \end{eqnarray*}
 
\begin{theorem} \label{explicit} The period-two orbit
\begin{eqnarray} \label{maxsol}
x_{2i} &=& \max\{f_1(\max\{r_2,r_4\}),r_3\}\nonumber \\
x_{2i+1} &=& \max\{g_1(\max\{r_1,r_3\}),r_4\}
\end{eqnarray}
for $i\ge 0$ is the attracting period-two orbit of difference equation (\ref{tte1}).
\end{theorem}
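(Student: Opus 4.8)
The plan is to lean on the convergence theory already developed and then verify the explicit formula by a direct computation. By Corollary \ref{rankResult} (the case $M=P=2$, $k=1$), equation (\ref{tte1}) has a unique period-two orbit, and it attracts every initial sequence. So it suffices to check that the pair $(a,b)$ with $a=\max\{f_1(\max\{r_2,r_4\}),r_3\}$ and $b=\max\{g_1(\max\{r_1,r_3\}),r_4\}$ is a period-two solution, i.e.\ that
\[ a=\max\{f_1(b),f_2(a)\}, \qquad b=\max\{g_1(a),g_2(b)\}. \]
Then the constant-period-two sequence $\ldots,a,b,a,b,\ldots$ is exactly (\ref{maxsol}), and by uniqueness it must be the globally attracting orbit.

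Two facts are used repeatedly. First, since $a\ge r_3$ and $r_3$ is the fixed point of $f_2$, Lemma \ref{lem1} gives $f_2(a)\le a$, with equality only when $a=r_3$; symmetrically $g_2(b)\le b$. Hence the even equation reduces to showing $f_1(b)\le a$, with equality unless $a=r_3$, and the odd equation to $g_1(a)\le b$, with equality unless $b=r_4$. Second, $f_1\circ g_1$ is a contraction with fixed point $r_1$ and $g_1\circ f_1$ is a contraction with fixed point $r_2$, so Lemma \ref{lem1} may be applied to these composites as well. Because the individual maps $f_i,g_i$ need not be monotone, any step that would like to invoke monotonicity is instead carried out directly with the contraction inequality $|f(x)-f(y)|<|x-y|$.

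The verification splits according to the signs of $r_1-r_3$ and $r_2-r_4$. If $r_1\ge r_3$ and $r_2\ge r_4$, then $a=r_1$ and $b=r_2$, and both equations follow at once from Lemma \ref{lem1} applied to $f_2$ and $g_2$. If $r_1\ge r_3$ and $r_2<r_4$ (and, symmetrically, if $r_1<r_3$ and $r_2\ge r_4$), then $b=r_4$ and $a=\max\{f_1(r_4),r_3\}$; the even equation is then immediate, and the odd equation needs $g_1(a)\le r_4$. When $a=f_1(r_4)$ this is Lemma \ref{lem1} applied to $g_1\circ f_1$ at the point $r_4>r_2$; when $a=r_3$ it follows from a short estimate, namely $f_1(r_4)\le r_3\le r_1=f_1(r_2)$ forces $r_1-r_3<r_4-r_2$ by contractivity of $f_1$, and then $g_1(r_3)<r_2+(r_1-r_3)<r_4$ by contractivity of $g_1$.

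The main obstacle is the remaining case $r_1<r_3$ and $r_2<r_4$, where the ``coupled'' fixed point $(r_1,r_2)$ of the $f_1,g_1$ cycle and the individual fixed points $r_3,r_4$ genuinely compete, so a priori neither term inside the maxima defining $a$ and $b$ can be discarded. This is precisely the situation handled by Lemma \ref{lem21} (with $f=f_1$, $g=g_1$, whose hypotheses $f_1(r_2)=r_1<r_3$ and $g_1(r_1)=r_2<r_4$ hold in this case): it supplies the dichotomy that at least one of $f_1(r_4)<r_3$ or $g_1(r_3)<r_4$ holds. In each of the resulting sub-cases the formula for $(a,b)$ collapses --- if both hold then $a=r_3$, $b=r_4$; if only $g_1(r_3)<r_4$ then $a=f_1(r_4)$, $b=r_4$; if only $f_1(r_4)<r_3$ then $a=r_3$, $b=g_1(r_3)$ --- and each collapsed form is checked directly from Lemma \ref{lem1} applied to $f_2$, to $g_2$, and to the composites $f_1\circ g_1$ (at $r_3>r_1$) and $g_1\circ f_1$ (at $r_4>r_2$). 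Assembling the four cases shows $(a,b)$ solves the period-two system, and hence by Corollary \ref{rankResult} it is the attracting orbit.
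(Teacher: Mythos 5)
Your proposal is correct and follows essentially the same route as the paper: reduce via Corollary \ref{rankResult} to verifying that the formula is a genuine period-two solution, then split into cases according to which of $r_1,r_3$ and which of $r_2,r_4$ dominates, using Lemma \ref{lem1} (applied to $f_2$, $g_2$, and the composites $f_1\circ g_1$, $g_1\circ f_1$) for each surviving case and Lemma \ref{lem21} to rule out the one impossible combination. Your ``short estimate'' in the mixed case ($r_1\ge r_3$, $r_2<r_4$ with $f_1(r_4)<r_3$) supplies a detail the paper's tables pass over quickly, but the overall argument is the same.
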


\begin{proof} Due to Corollary \ref{rankResult}, it suffices to show that the formula (\ref{maxsol}) gives a period-two orbit. First note the following table can be obtained easily:
\begin{center}
  \begin{tabular}{ | c | c || c | c |}
    \hline
    $\max\{r_1,r_3\}$ & $\max\{r_2,r_4\}$ & $x_{2i}$ & $x_{2i+1}$ \\ \hline\hline
    $r_1$ & $r_2$ & 					$r_1$ & $r_2$ \\ \hline
    $r_3$ & $r_4$ & 					$\max\{r_3,f_1(r_4)\}$ &  $\max\{r_4,g_1(r_3)\}$ \\ \hline
    $r_1$ & $r_4$ & 					$\max\{r_3,f_1(r_4)\}$ & $r_4$ \\ \hline
    $r_3$ & $r_2$ & 					$r_3$ & $\max\{r_4,g_1(r_3)\}$ \\
    \hline
  \end{tabular}
\end{center}
For example, in the first row we have:
\[ x_{2i} = \max\{f_1(\max\{r_2,r_4\}),r_3\}  = \max\{f_1(r_2),r_3\} = \max\{ r_1, r_3 \} = r_1 \]
The rest follow from similar simple logic.  Now, we can make a new table considering all the possibilities for $x_{2i}$, $x_{2i+1}$, and $x_{2i+2} = \max\{f_1(x_{2i+1}),f_2(x_{2i})\}$.
\begin{center}
  \begin{tabular}{ | c | c || c | c || c |}
    \hline
    $x_{2i}$ & $x_{2i+1}$ &				$f_1(x_{2i+1})$ &		$f_2(x_{2i})$ &	$x_{2i+2}$ \\ \hline\hline
    $r_1$ & $r_2$ &				$r_1$ &			$f_2(r_1)$ &	$r_1$ \\ \hline
    $r_3$ & $r_4$ &		 		$f_1(r_4)$ &		$r_3$ &		 $r_3$\\ \hline
    $f_1(r_4)$ & $r_4$ &			$f_1(r_4)$ &		$f_2(f_1(r_4))$& $f_1(r_4)$ \\ \hline
    $r_3$ & $ g_1(r_3)$ &			$f_1(g_1(r_3))$ &	$r_3$&		$r_3$ \\ \hline
  \end{tabular}
\end{center}

 Since columns three and four are clear, we must justify the final column using Lemma \ref{lem1}, (note that the final column is simply the max of columns three and four).  In the first row, since $r_1 > r_3$ and $r_3$ is the fixed point of $f_2$, Lemma \ref{lem1} implies that $f_2(r_1) < r_1$.  In the second row, note that $r_3  = \max\{ f_1(r_4), r_3\}$ so $r_3 > f_1(r_4)$.  In the third row, note that $f_1(r_4)  > r_3$ and since $r_3$ is the fixed point of $f_2$ by Lemma \ref{lem1} we conclude that $f_2(f_1(r_4)) < f_1(r_4)$.  Finally, in the fourth column note that this combination only occurs when $r_3 > r_1$ and $r_1$ is the fixed point of $f_1 \circ g_1$ so by Lemma \ref{lem1} we have $f_1(g_1(r_3)) < r_3$.  Thus the table shows that $x_{2i+2}=x_{2i}$.  

Note that one combination, $x_{2i} = f_1(r_4)$ and $x_{2i+1} = g_1(r_3)$ is missing from the table. If this combination occurred, then $f_1(r_2)= r_1<r_3\leq f_1(r_4)$ and $ g_1(r_1)=r_2 < r_4 \leq g_1(r_3)$, which contradicts Lemma \ref{lem21}.  Thus this combination is impossible, and the four rows of the table represent all possibilities.

We now construct an analogous table to show that $x_{2i+3}=x_{2i+1}$, which will complete the proof.
\begin{center}
  \begin{tabular}{ | c | c || c | c || c |}
    \hline
    $x_{2i+2}$ & $x_{2i+1}$ &				$g_1(x_{2i+2})$ &		$g_2(x_{2i+1})$ &	$x_{2i+3}$ \\ \hline\hline
    $r_1$ & $r_2$ &				$r_2$ &			$g_2(r_2)$ &	$r_2$ \\ \hline
    $r_3$ & $r_4$ &		 		$g_1(r_3)$ &		$r_4$ &		 $r_4$\\ \hline
    $f_1(r_4)$ & $r_4$ &			$g_1(f_1(r_4))$ &		$r_4$& $r_4$ \\ \hline
    $r_3$ & $ g_1(r_3)$ &			$g_1(r_3)$ &	$g_2(g_1(r_3))$&		$g_1(r_3)$ \\ \hline
  \end{tabular}
\end{center}
In the first row, $r_2 > r_4$ so by Lemma \ref{lem1}, $g_2(r_2) < r_2$.  In the second row, $r_4  = \max\{g_1(r_3),r_4\}$ so $r_4 > g_1(r_3)$.  In the third row, $r_4 > r_2$ so by Lemma \ref{lem1} $g_1(f_1(r_4) < r_4$.  In the final row, $g_1(r_3) > r_4$ so by Lemma \ref{lem1} we have $g_2(g_1(r_3)) < g_1(r_3)$.  This completes the justification of this table, and finishes the proof.
\end{proof}
\begin{example}
\label{ex44}
We return to the equation from Example \ref{ex1} with $M = P = 2$:
\begin{eqnarray*} 
x_{2i}&=& \max\{A_{11}x_{2i-1}^{\alpha_1},A_{21}x_{2i-2}^{\alpha_2} \}\\
x_{2i+1} &=& \max\{A_{12}x_{2i}^{\alpha_1},A_{22}x_{2i-1}^{\alpha_2} \}
\end{eqnarray*}
where $A_{jk} > 0$ and $-1<\alpha_1,\alpha_2<1$. We rewrite this equation by taking the natural log of each term (since $\ln$ is monotonic) to get
\begin{eqnarray*} 
y_{2i}&=& \max\{\ln A_{11}+\alpha_1y_{2i-1},\ln A_{21}+\alpha_2 y_{2i-2}\}\\
y_{2i+1} &=& \max\{\ln A_{12}+\alpha_1 y_{2i},\ln A_{22}+\alpha_2 y_{2i-1} \}.
\end{eqnarray*}
In terms of Theorem 4.2 we have:
\begin{eqnarray*}
r_1 &=& \frac{\ln A_{12} + \alpha_1 \ln A_{11}}{1-\alpha_1^2}\\
r_2 &=& \frac{\ln A_{11} + \alpha_1 \ln A_{12}}{1-\alpha_1^2}\\
r_3 &=& \frac{\ln A_{22}}{1-\alpha_2}\\
r_4 &=& \frac{\ln A_{21}}{1-\alpha_2}
\end{eqnarray*}
Then the period two limit is  defined in Theorem 4.2, and we can exponentiate to get the period two limit of $x_n$.  Thus we have
\begin{eqnarray*}
x_{2i} &=& \max \left\{ A_{12}A_{21}^{\frac{\alpha_1}{1-\alpha_2}},A_{12}A_{11}^{\frac{\alpha_1}{1-\alpha_1^2}}A_{12}^{\frac{\alpha_1^2}{1-\alpha_1^2}}, A_{22}^{\frac{1}{1-\alpha_2}} \right\} \\
x_{2i+1}&=& \max \left\{ A_{11}A_{22}^{\frac{\alpha_1}{1-\alpha_2}},A_{11}A_{12}^{\frac{\alpha_1}{1-\alpha_1^2}}A_{11}^{\frac{\alpha_1^2}{1-\alpha_1^2}}, A_{21}^{\frac{1}{1-\alpha_2}} \right\} 
\end{eqnarray*}
as the periodic limit.  This solution is consistent with the autonomous case  in Example 2.6 of \cite{sauerMax}. Setting $A_1=A_{11}=A_{12}$ and $A_2=A_{21}=A_{22}$, the solution is found to be simply $x_n \to \max\left\{ A_1^{\frac{1}{1-\alpha_1}}, A_2^{\frac{1}{1-\alpha_2}} \right\}$.
\end{example} 
\end{section}

\begin{section}{Discussion}
We have shown in Corollary \ref{rankResult} that solutions of periodically-forced rank-type difference equations are asymptotically periodic of the forcing period. The same is true of a class of more general equations called sup-contractive, according to Corollary \ref{mainresult}. In some simple cases, we were able to identify explicit solutions, as in Theorem \ref{periodone} and Theorem \ref{explicit}. The solutions appear to be significantly more complicated for larger period $P$ and memory $M$ than treated here, and explicit formulas for the solutions remain to be found.

\bigskip

{\bf Acknowledgments.} This research was partially supported by NSF under DMS-0508175 and DMS-0811096.
%
\end{section}

\end{document}